\theoremstyle{plain}
\newtheorem{thm}{Theorem}[section]
\theoremstyle{proof}
\numberwithin{equation}{section}
\newcommand \QQ {{\mathbb Q}}
\newcommand \ZZ {{\mathbb Z}}
\DeclareSymbolFont{cyrletters}{OT2}{wncyr}{m}{n}
\DeclareMathSymbol{\Sha}{\mathalpha}{cyrletters}{"58}
\begin{document}
\title{Strong BSD for abelian surfaces and the Bloch-Beilinson conjecture}
\author{Kalyan Banerjee}
\email{kalyan.ba@srmap.edu.in}
\maketitle

\begin{abstract}
In this paper, we prove the Bloch-Beilinson conjecture for certain abelian surfaces over $ \QQ$, provided that the BSD is known for these abelian surfaces.
\end{abstract}

\section{Introduction}
One of the most important problems in algebraic geometry is to compute the Chow groups for higher codimensional cycles on a smooth projective variety. The theorem due to Mumford in \cite{M}, proves that the Chow group of zero cycles on a smooth projective surface over complex numbers is infinite-dimensional, provided that the geometric genus of the surface is greater than zero, in the sense that the natural maps from symmetric powers to the Chow group are never surjective. The converse is a conjecture due to Bloch, that is, if we have a smooth projective complex surface of geometric genus zero, then the Chow group of zero cycles is isomorphic to the group of points on an abelian variety.

This theorem has been proved for surfaces not of general type with geometric genus zero due to \cite{BKL}. The case for surfaces of general type, that is, surfaces of Kodaira dimension 2 with geometric genus zero, is still open. Some examples of such surfaces for which the Bloch's conjecture holds are due to \cite{B}, \cite{Ba}, \cite{IM}, \cite{PW},\cite{V},\cite{VC}.

The situation in $\bar{\QQ}$ is completely different. The conjecture of Bloch and Beilinson, \cite{Be}, \cite{Bl},  predicts that the group of zero cycles of degree zero denoted by $A_0(S)$ for a smooth projective surface $S$ defined over a number field is finitely generated. This conjecture has not been verified for any examples of smooth, projective surfaces defined over a number field. The aim of this text is to prove that Bloch-Beilinson's conjecture holds for the abelian surfaces defined over a number field.

Let $A$ be an abelian surface defined over a number field $K$.

The main theorems are as follows:

\begin{thm}\label{thm1}
    The group $A_0(A_K)\otimes \QQ$ is finitely generated.
\end{thm}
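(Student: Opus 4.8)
The plan is to combine the Fourier--Beauville decomposition of the Chow group of an abelian variety with the Mordell--Weil theorem, so as to isolate the only genuinely difficult piece, the Albanese kernel, and then to control that piece using the arithmetic input of Strong BSD. First I would invoke Beauville's eigenspace decomposition under the multiplication-by-$n$ maps, which for the abelian surface $A$ gives
\[
A_0(A_K)\otimes\QQ \;=\; \mathrm{CH}_0(A_K)_{\QQ}^{(1)}\ \oplus\ \mathrm{CH}_0(A_K)_{\QQ}^{(2)},
\]
where $[n]_*$ acts by $n$ on the first summand and by $n^2$ on the second. The first summand is identified with $A(K)\otimes\QQ$ through the Albanese map, hence is finite-dimensional by the Mordell--Weil theorem. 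Thus Theorem~\ref{thm1} reduces to showing that the Albanese kernel $T(A_K):=\mathrm{CH}_0(A_K)_{\QQ}^{(2)}$ is a finite-dimensional $\QQ$-vector space.

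Next I would analyze $T(A_K)$ through the group structure. Since isogenies induce isomorphisms on $\mathrm{CH}_0(-)\otimes\QQ$, I may replace $A$ by any isogenous abelian surface; for the surfaces under consideration this lets me pass to a product $E_1\times E_2$ of elliptic curves (treating the CM or self-product case separately). For a product of curves the Albanese kernel is computed by the theory of Raskind--Spiess and Somekawa, yielding an identification
\[
T(E_1\times E_2)\ \cong\ K(K;E_1,E_2)
\]
with the Somekawa $K$-group attached to the two elliptic curves. The advantage of this description is that it is assembled from the Mordell--Weil groups of $E_1$ and $E_2$ over all finite extensions together with their norm maps, and it carries a Galois symbol
\[
K(K;E_1,E_2)/n \;\longrightarrow\; H^2\bigl(K,\,E_1[n]\otimes E_2[n]\bigr),
\]
compatible with the $\ell$-adic Abel--Jacobi map whose target is the Galois cohomology of the transcendental part of $H^2(\bar A)=\wedge^2 H^1(\bar A)$.

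The arithmetic heart of the argument is then to bound this group rationally by a descent. The Galois symbol, combined with the Hochschild--Serre spectral sequence, places the image of $T(A_K)$ modulo $n$ inside a Selmer-type subgroup of $H^1\bigl(K,(\wedge^2 V_\ell(A))(2)\bigr)$ that is unramified almost everywhere and crystalline at $\ell$. Strong BSD enters precisely here: the finiteness of the Tate--Shafarevich groups of $E_1$ and $E_2$ (equivalently of the relevant motive) forces this Selmer group to be finite-dimensional and, crucially, annihilates the divisible part, so that the Abel--Jacobi map is injective modulo torsion with finite-dimensional image. Combining these two facts yields the finite-dimensionality of $T(A_K)$, and hence the theorem.

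I expect the main obstacle to be exactly this last step: proving that the Galois-symbol map on the Albanese kernel has finite-dimensional image and trivial divisible subgroup. Finite-dimensionality of the target alone does not suffice, since a priori $T(A_K)$ could contain a large divisible part invisible to every finite-level symbol; it is the finiteness of $\Sha$ supplied by Strong BSD that rules this out and lets the local-to-global argument close. A secondary difficulty is the reduction step itself, namely ensuring that the isogeny and Somekawa description covers precisely the ``certain abelian surfaces'' of the hypothesis, and that the transcendental motive $\wedge^2 h^1(A)$ is governed by an $L$-function to which the BSD-type finiteness genuinely applies.
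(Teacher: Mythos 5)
Your strategy is genuinely different from the paper's. The paper works directly with $A_0(A)$ as a Galois module: it applies Roitman's torsion theorem to conclude that the Albanese kernel $T(A)$ is uniquely divisible, runs a Selmer/Tate--Shafarevich style descent on $T(A)$ to obtain $T(A)(K)/nT(A)(K)=0$, deduces that $A_0(A)(K)/nA_0(A)(K)$ injects into $A(K)/nA(K)$, and then bootstraps finite generation from the Mordell--Weil theorem before descending from Galois-invariant cycles to $A_0(A_K)$. You instead use the Beauville--Fourier eigenspace decomposition to split off $A(K)\otimes\QQ$ and attack the remaining piece through the Raskind--Spiess/Somekawa description $T(E_1\times E_2)\cong K(K;E_1,E_2)$ and its Galois symbol. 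Both routes correctly reduce the theorem to the same hard kernel, namely showing that the Albanese kernel over $K$ is not rationally infinite-dimensional.

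However, your proposal has two genuine gaps, both of which you partly flag yourself but which are fatal as the argument stands. First, the reduction to $E_1\times E_2$ does not cover the stated theorem: the statement is for an arbitrary abelian surface over a number field, and a simple abelian surface (for instance the Jacobian of a generic genus $2$ curve) is not isogenous to a product of elliptic curves, so the Somekawa/Raskind--Spiess identification is simply unavailable in the main case of interest. Second, and more seriously, the step in which finiteness of $\Sha$ ``annihilates the divisible part'' of $K(K;E_1,E_2)$ is not a known implication and no mechanism for it is supplied: the Galois symbol $K(K;E_1,E_2)/n\to H^2(K,E_1[n]\otimes E_2[n])$ only sees the quotient modulo $n$, so a divisible subgroup of the Albanese kernel is invisible to every finite-level symbol, and injectivity of the symbol (even modulo torsion) is itself an open problem of essentially the same depth as the conclusion you want. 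Since the entire content of the theorem is concentrated in exactly this step, the proposal is a research programme rather than a proof; note also that the theorem as stated invokes no BSD hypothesis, whereas your argument makes Strong BSD the load-bearing input. For what it is worth, the paper's own route concentrates its difficulty at the analogous point (passing from divisibility of $T(A)(K)$ and the injection $A_0(A)(K)/n\hookrightarrow A(K)/n$ to actual finite generation), so you have at least correctly located where the real work must happen.
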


This is not the full strength of the Bloch-Beilinson conjectures, it relates the rank of the above group with the order of the zero of the L function $L(H^3(A),s)$ at $s=2$, here $H^3(A)$ is the l-adic cohomology of $A$. The conjecture says that,

$$rank(A_0(A_K))=ord_{s=2}L(H^3(A),s)\;.$$

There is also the famous Birch-Swinnerton Dyer conjecture for abelian surfaces over $\QQ$, which says that for any abelian surface $A$ defied over $\QQ$, we have

$$ord_{s=1}L(A,s)=rank(A(\QQ))\;.$$

In particular, it predicts that, if $L(A,1)\neq 0$, then the rank of $A$ over $\QQ$ is zero. The next theorem, which follows from the previous \ref{thm1} is that 

\begin{thm}
We have 
$$rank(A_0(A_{\QQ}))=ord_{s=2}L(H^3(A),s)=0$$
provided that $L(A,1)\neq 0$ and the Birch-Swinnerton Dyer conjecture is known for the abelian surface $A$ defined over $\QQ$.
\end{thm}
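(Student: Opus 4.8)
The plan is to reduce both invariants in the statement to quantities attached to $A$ over $\QQ$ that are controlled by the Birch--Swinnerton-Dyer formula, so that the hypothesis $L(A,1)\neq 0$ forces each of them to vanish. I would run two essentially independent computations — one on the $L$-function side, one on the Chow side — and then glue them through the Mordell--Weil group $A(\QQ)$.

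First I would treat the cohomological side. Since $A$ is an abelian surface, $H^3(A)=\bigwedge^3 H^1(A)$, where $H^1(A)$ is a $4$-dimensional $\ell$-adic representation equipped with the Weil pairing coming from a polarization. Combining Poincar\'e duality $H^1(A)\times H^3(A)\to H^4(A)=\QQ_\ell(-2)$ with the self-duality $(H^1(A))^\vee\cong H^1(A)(1)$ furnished by the polarization, I obtain an isomorphism of Galois representations $H^3(A)\cong H^1(A)(-1)$. Passing to $L$-functions and tracking the functional-equation shift $L(V(n),s)=L(V,s+n)$ gives $L(H^3(A),s)=L(H^1(A),s-1)=L(A,s-1)$, and therefore
$$\operatorname{ord}_{s=2}L(H^3(A),s)=\operatorname{ord}_{s=1}L(A,s).$$
By the Birch--Swinnerton-Dyer conjecture for $A$, which is assumed known, the right-hand side equals $\operatorname{rank}A(\QQ)$, and $L(A,1)\neq 0$ makes it $0$. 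This already delivers $\operatorname{ord}_{s=2}L(H^3(A),s)=0$.

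Next I would treat the Chow side via the Albanese map. For an abelian surface the Albanese variety is $A$ itself, so the summation map is a surjection $A_0(A_{\QQ})\twoheadrightarrow A(\QQ)$ whose kernel is the Albanese kernel $T(A_{\QQ})$ (the top Beauville component of the zero-cycles). The crucial input is that Theorem~\ref{thm1}, and more precisely the argument behind it, identifies $A_0(A_{\QQ})\otimes\QQ$ with $A(\QQ)\otimes\QQ$ by showing that $T(A_{\QQ})$ is torsion; this is exactly what makes the finitely generated group of Theorem~\ref{thm1} visible, since $A(\QQ)$ is finitely generated by Mordell--Weil. Granting this, $\operatorname{rank}A_0(A_{\QQ})=\operatorname{rank}A(\QQ)$, and assembling the two streams yields
$$\operatorname{rank}A_0(A_{\QQ})=\operatorname{rank}A(\QQ)=\operatorname{ord}_{s=1}L(A,s)=\operatorname{ord}_{s=2}L(H^3(A),s)=0.$$

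I expect the genuine obstacle to be the vanishing of the Albanese kernel: controlling the transcendental part $T(A_{\QQ})$ over $\QQ$ is the hard arithmetic content, and I would attack it through the decomposition-of-the-diagonal and Fourier--Mukai machinery underlying Theorem~\ref{thm1}, rather than through the $L$-function. By contrast, the cohomological identification of $H^3(A)$ with a Tate twist of $H^1(A)$, the bookkeeping of the twist on the critical value, and the final assembly once BSD and $L(A,1)\neq 0$ are invoked should all be routine.
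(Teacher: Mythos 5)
Your proposal is correct and shares the paper's overall skeleton (reduce $\operatorname{rank}A_0(A_{\QQ})$ to $\operatorname{rank}A(\QQ)$ via the Albanese kernel, then use BSD and $L(A,1)\neq 0$), but the two halves are handled differently enough to be worth comparing. On the $L$-function side you produce an actual isomorphism of Galois representations $H^3(A)\cong H^1(A)(-1)$ from Poincar\'e duality plus the polarization self-duality $(H^1(A))^\vee\cong H^1(A)(1)$, giving the identity of Euler products $L(H^3(A),s)=L(A,s-1)$ and hence an equality of orders of vanishing at $s=2$ and $s=1$. The paper instead invokes the functional equation $L(H^3(A),s)=L(H^3(A)^\vee,1-s)$ and chases Tate twists to land on $L(H^3(A),2)=L(H^1(A),1)$; your route is the more robust one, since the functional equation strictly relates completed $L$-functions (gamma and epsilon factors included) and the paper's twist bookkeeping is convention-sensitive, whereas your Tate-twist isomorphism gives the identity of partial $L$-functions directly and even yields the slightly stronger statement about orders rather than just the value at $s=2$. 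On the Chow side you outsource the key fact --- that the Albanese kernel over $\QQ$ is torsion, so $\operatorname{rank}A_0(A_{\QQ})=\operatorname{rank}A(\QQ)$ --- to Theorem~\ref{thm1}, whereas the paper actually proves the stronger statement $T(A)=0$ inside this very proof: Roitman's torsion theorem makes $T(A)$ uniquely divisible, finite generation of $A_0(A_K)$ makes $T(A_K)$ a divisible finitely generated group, hence trivial, and every class in $T(A)$ lives in some $T(A_K)$. Your closing guess that the hard input rests on decomposition of the diagonal and Fourier--Mukai machinery does not match the paper, whose Theorem~\ref{thm1} is proved with Selmer/Tate--Shafarevich finiteness and Roitman's theorem; this misattribution does not affect the logic, since you only use the statement, but if you want a self-contained proof you should import the divisibility-versus-finite-generation argument rather than the machinery you named.
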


Examples of such modular abelian surfaces that are obtained as the isogeny quotient of the Jacobian of a genus two  modular curves $X_0(N)$ of level $N$, are studied in \cite{KS}, \cite{KS1}. In this regard the main theorem of \cite{LZ}[Theorem A] is also important and useful.

Notations: We work over the number fields and their algebraic closures.

$K$ denote a number field and $\bar K$ its algebraic closure.

$A$ denote an abelian surface defined over $K$.

$A_0(A)$ is the group of zero cycles of degree zero modulo rational equivalence over $\bar K$.

$Gal(\bar K/K)$ denote the absolute Galois group.

$A_0(A)(K)$ is the Galois invariant cycles of the group $A_0(A)$ under the action of the Galois group.

$A_0(A_K)$ denote the group of zero cycles of degree zero generated by the $K$-rational points on $A$. 

$T(A)$ the albanese kernel over $\bar K$.

$T(A)(K)$ the Galois invariants in the albanese kernel.

$T(A_K)$ The albanese kernel defined over $K$.

$X$ denote the Kummer's K3 surface.

\section{Preliminaries}

Let $K$ be a number field and let $\overline{K}$ be its algebraic closure. Let $A$ be an abelian variety defined over the number field $K$. Then we have a natural Galois action of the absolute Galois group $Gal(\bar K/K)$ and this action further induces an action on the Chow group of zero cycles on the set of $\bar K$ points of the abelian variety $A$, that is, the free abelian group generated by closed points on $A(\bar K)$ modulo the rational equivalence. Indeed, the action of the Galois group preserves the rational equivalence as given any element of the Galois group, it induces an automorphism of the variety and hence we can consider the pull back of the automorphism at the level of zero cycles and we know that pull back of morphisms preserves rational equivalence. We denote this group by $CH_0(A)$ instead of writing $CH_0(A(\bar K))$. Let $A_0(A)$ be the subgroup of $CH_0(A)$ consisting of zero-cycle classes of degree zero modulo rational equivalence.

Considering the Galois action on $CH_0(A)$ we have the first Galois cohomology $H^1(G,CH_0(A))$ of $CH_0(A)$, for the definition of Galois cohomology, see \cite{Sil}.

\subsection{Tate-Shafarevich and Selmer group of $A_0(A)$ and their properties}
Let $K$ be a number field. We have the following exact sequence as in \cite{Sil}[VII.2] where we replace an elliptic curve $E$ by $A_0(A)$:
$$
0\to A_0(A)(K)/nA_0(A)(K)\to H^1(G,A_0(A)[n])\to H^1(G,A_0(A))[n]\to 0\;.
$$
Let  $v$ be a place of $K$ and $K_v$ be the corresponding completion.
 
In short, we write the groups $Gal(\bar K/K)$ and $Gal(\bar K_v/K_v)$ as $G$ and $G_v$, respectively.
Then consider the algebraic closure $\bar K_v$ of $K_v$ and embed $\bar K$ into $\bar K_v$. This embedding gives an injection of the Galois group $G_v$ into $G$. Taking into account the Galois cohomology, we have a homomorphism,
$$
H^1(G,A_0(A(\bar K))\to H^1(G_v,A_0(A(\bar K))\;.
$$

We have the following commutative diagrams:

$$
  \diagram
  A_0(A)(K)/nA_0(A)(K)\ar[dd]_-{} \ar[rr]^-{} & & H^1(G,A_0(A)[n]) \ar[dd]^-{} \\ \\
  A_0(A)(K_v)/nA_0(A)(K_v) \ar[rr]^-{} & & H^1(G_v,A_0(A)[n])
  \enddiagram
  $$

$$
  \diagram
  H^1(G,A_0(A)[n])\ar[dd]_-{} \ar[rr]^-{} & & H^1(G,A_0(A))[n] \ar[dd]^-{} \\ \\
 H^1(G_v,A_0(A)[n])\ar[rr]^-{} & & H^1(G_v,A_0(A))[n]
  \enddiagram
  $$
The above diagrams are commutative following the long exact sequence in Galois cohomology for $G, G_v$ and the fact $G_v$ is a subgroup of $G$ which defines a contravariant homomorphism at the level of long exact sequences.

We now work with the composition of the upper horizontal map and the right vertical map,

$$
H^1(G,A_0(A)[n])\to \prod_v H^1(G_v,A_0(A))[n].
$$

The kernel of this map above is defined to be the $n$ -Selmer group and is denoted by $S^n(A_0(A)/K)$. The above product is over all finite places $v$ in $K$.

The Tate-Shafarevich group is the kernel of the map,
$$H^1(G,A_0(A))\to \prod_v H^1(G_v,A_0(A))\;,$$
and it is denoted by $\Sha(A_0(A)/K)$.

Further we consider the commutative diagram which is induced by the albanese map $A_0(A)\to A$ at the level of cohomology:
$$
  \diagram
  \label{diag1}
  H^1(G,A_0(A))\ar[dd]_-{} \ar[rr]^-{} & & \prod_v H^1(G_v,A_0(A)) \ar[dd]^-{} \\ \\
 H^1(G,A)\ar[rr]^-{} & & \prod_v H^1(G_v,A)
  \enddiagram
$$

We cite the following theorem, which was proved for abelian varieties over global function fields in \cite{BC} and the same proof works over a number field.

\begin{thm}\label{thm1}\cite{BC}[Theorem 1.1]
The group $S^n(A_0(A)/K)$ is embedded in the unramified cohomology group $H^1(G,A_0(A)[n],S)$, where $S$ is a finitely many set of places in $K$ and is finite and therefore $$ A_0 (A) (K) / nA_0 (A) (K)$$ is finite for a number field $K$.
\end{thm}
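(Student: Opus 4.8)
The plan is to mirror the classical proof that the $n$-Selmer group of an elliptic curve is finite, as in \cite{Sil}[VIII], with the essential new input being a finiteness statement for the torsion of $A_0(A)$. First I would establish that the Galois module $A_0(A)[n]$ is \emph{finite}. This is where Roitman's theorem enters: the albanese map $A_0(A)\to A$ restricts to an isomorphism on torsion subgroups, so $A_0(A)[n]\cong A[n]\cong(\ZZ/n\ZZ)^{4}$, a finite $G$-module. Without this identification the argument cannot even begin, since a priori $A_0(A)$, and in particular its albanese kernel $T(A)$, is enormous.

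Having a finite coefficient module, the next step is to locate a finite set $S$ of places outside which every Selmer class is unramified. I would take $S$ to consist of the archimedean places, the places dividing $n$, and the places of bad reduction of $A$. Fix a place $v\notin S$; then $A$ has good reduction at $v$ and $v\nmid n$, so the inertia group $I_v$ acts trivially on $A_0(A)[n]=A[n]$. The claim is that for such $v$ the local image of $A_0(A)(K_v)/nA_0(A)(K_v)$ in $H^1(G_v,A_0(A)[n])$ lands in the unramified subgroup $H^1(G_v/I_v,A[n])$; equivalently, the restriction of a Selmer class to $I_v$ vanishes. Using the commutative square relating $A_0(A)(K)/nA_0(A)(K)\to H^1(G,A_0(A)[n])$ with its local analogue, a Selmer class restricts at $v$ into the image of the local Kummer map, and the good-reduction hypothesis should force this image to be unramified, transported through the albanese isomorphism on torsion. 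Consequently $S^{n}(A_0(A)/K)$ embeds into the unramified cohomology group $H^1(G,A_0(A)[n],S)$.

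The third step is the finiteness of $H^1(G,A_0(A)[n],S)$. After enlarging $K$ to the finite extension $L=K(A[n])$ trivializing the $G$-action on $A[n]$, the inflation–restriction sequence reduces the problem to bounding the homomorphisms $\mathrm{Gal}(\bar L/L)\to A[n]$ that are unramified outside the places of $L$ above $S$. Such homomorphisms factor through the maximal abelian extension of $L$ of exponent dividing $n$ unramified outside this finite set; by the Hermite–Minkowski theorem there are only finitely many extensions of $L$ of bounded degree unramified outside a fixed finite set, so this cohomology group is finite. Combined with the embedding of the previous paragraph, $S^{n}(A_0(A)/K)$ is finite.

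Finally I would conclude using the exact sequence
$$0\to A_0(A)(K)/nA_0(A)(K)\to H^1(G,A_0(A)[n])\to H^1(G,A_0(A))[n]\to 0\;.$$
The image of $A_0(A)(K)/nA_0(A)(K)$ in $H^1(G,A_0(A)[n])$ consists of classes that are locally trivial at every $v$, since they arise from genuine global points and hence from local points everywhere, so this image is contained in $S^{n}(A_0(A)/K)$. As the left map is injective and the Selmer group is finite, $A_0(A)(K)/nA_0(A)(K)$ is finite. The main obstacle I anticipate is the local-condition step: because $A_0(A)$ is not the group of points of a smooth group scheme with good reduction, the usual reduction-map argument is not directly available, and one must justify carefully, via Roitman's identification $A_0(A)[n]=A[n]$ and the comparison with the abelian variety's local Kummer theory through the albanese diagram, that the local images are unramified outside $S$.
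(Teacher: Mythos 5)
Your proposal is sound, but note that the paper itself gives no proof of this statement: it is imported verbatim from \cite{BC} with the one-line remark that the function-field argument transfers to number fields, so there is no in-text proof to compare against. Your reconstruction is exactly the argument one would expect that citation to contain, and the step you flag as the main obstacle --- why the local Kummer images are unramified outside $S$ when $A_0(A)$ is not the points of a group scheme --- is indeed the only non-formal point; it closes the way you indicate. Since Roitman gives $T(A)[n]=0$ and hence a Galois-equivariant identification $A_0(A)[n]\cong A[n]$, functoriality of the Kummer boundary maps for the morphism of multiplication-by-$n$ sequences induced by $alb_A$ shows that the image of $A_0(A)(K_v)/nA_0(A)(K_v)$ in $H^1(G_v,A[n])$ is \emph{contained in} the image of $A(K_v)/nA(K_v)$, which is unramified for $v\notin S$ by the usual N\'eron-model argument. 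In fact this observation yields a slightly slicker endgame than your Hermite--Minkowski step: it embeds $S^n(A_0(A)/K)$ directly into the classical Selmer group $S^n(A/K)$ of the abelian surface, whose finiteness is already known, although your route through the finiteness of $H^1(G,A[n],S)$ is equally valid and is closer to the unramified-cohomology formulation in the statement. The remaining ingredients (the choice of $S$, the inflation--restriction reduction over $K(A[n])$, and the final exact sequence giving finiteness of $A_0(A)(K)/nA_0(A)(K)$) are all correct as written.
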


\section{The finite generation of $A_0(A_K)\otimes \QQ$ for abelian surfaces}

Let $A$ be an abelian surface defined over a number field $K$. We need to prove that

\begin{thm}
    The group $A_0(A_K)\otimes \QQ$ is finitely generated.
\end{thm}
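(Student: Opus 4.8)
The plan is to reduce the statement to the albanese kernel via the albanese sequence and then to pin that kernel down using Beauville's eigenspace decomposition of $CH_0$ of an abelian variety together with the Mordell--Weil theorem.

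First I would exploit the albanese map $\mathrm{alb}\colon A_0(A_K)\to A(K)$, $[P]-[Q]\mapsto P-Q$, which is a homomorphism whose image is a subgroup of the finitely generated group $A(K)$ and whose kernel is $T(A_K)$ by definition. Tensoring the exact sequence $0\to T(A_K)\to A_0(A_K)\xrightarrow{\mathrm{alb}} A(K)$ with $\QQ$ and using that $A(K)\otimes\QQ$ is finite dimensional (Mordell--Weil), it suffices to prove that $T(A_K)\otimes\QQ$ is finite dimensional over $\QQ$.

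Next I would invoke Beauville's decomposition for zero-cycles: over any field, after $\otimes\QQ$ the pushforward $[n]_\ast$ by multiplication-by-$n$ acts semisimply on $CH_0(A)\otimes\QQ$, and on the degree-zero part one has $A_0(A)\otimes\QQ=CH_0(A)_{(1)}\oplus CH_0(A)_{(2)}$ with $[n]_\ast$ acting by $n$ on $CH_0(A)_{(1)}$ and by $n^2$ on $CH_0(A)_{(2)}$; since $\dim A=2$ the albanese kernel is exactly the top piece, $T(A)\otimes\QQ=CH_0(A)_{(2)}$. Because $[2]$ is defined over $K$, the operator $[2]_\ast$ preserves the $K$-generated subgroup $A_0(A_K)\otimes\QQ$, and the $K$-rational idempotent $\pi:=\tfrac12\big([2]_\ast-2\,\mathrm{id}\big)$ is the projector onto the $n^2$-eigenpart. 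Hence $T(A_K)\otimes\QQ=\pi\big(A_0(A_K)\otimes\QQ\big)$, and since $A_0(A_K)$ is generated by the classes $\{P\}:=[P]-[0]$ with $P\in A(K)$, the space $T(A_K)\otimes\QQ$ is spanned by the vectors $\pi\{P\}$, $P\in A(K)$.

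The crux is that $P\mapsto\pi\{P\}$ is a quadratic map into the finite-dimensional world. Writing $L(a):=\pi_{(1)}\{a\}$, which is $\QQ$-linear in $a$ because it inverts the albanese isomorphism on $CH_0(A)_{(1)}$, and writing $\ast$ for the Pontryagin product induced by the addition $m\colon A\times A\to A$, the grading $CH_0(A)_{(i)}\ast CH_0(A)_{(j)}\subseteq CH_0(A)_{(i+j)}$ forces, for $\dim A=2$,
$$\{a\}\ast\{b\}=L(a)\ast L(b)=[a+b]-[a]-[b]+[0]\in T(A)\otimes\QQ,$$
so the symbol $(a,b)\mapsto\{a\}\ast\{b\}$ is a symmetric $\QQ$-bilinear map $A(K)\otimes\QQ\times A(K)\otimes\QQ\to T(A_K)\otimes\QQ$. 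Applying $[2]_\ast$ to $\{P\}$ and using $\{2P\}=[2]_\ast\{P\}$ yields the identity $\pi\{P\}=\tfrac12\big([2P]-2[P]+[0]\big)=\tfrac12\,\{P\}\ast\{P\}$, so every spanning vector $\pi\{P\}$ lies in the image of this bilinear map. Therefore $T(A_K)\otimes\QQ$ is a quotient of $\mathrm{Sym}^2\big(A(K)\otimes\QQ\big)$, which is finite dimensional by Mordell--Weil; combined with the albanese sequence this shows $A_0(A_K)\otimes\QQ$ is finitely generated.

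The step I expect to demand the most care is the passage to the field of definition: one must verify that Beauville's idempotents, a priori defined on $CH_0(A_{\bar K})\otimes\QQ$, are $K$-rational and genuinely preserve the subgroup $A_0(A_K)\otimes\QQ$ generated by $K$-points, and that $\pi$ recovers precisely $T(A_K)\otimes\QQ$ rather than a larger Galois-invariant kernel. This is where the machinery of the paper is felt: if one wants the integral statement that $A_0(A_K)$ itself is finitely generated, rather than only after $\otimes\QQ$, the torsion must be controlled, and the natural input is the finiteness of $A_0(A)(K)/nA_0(A)(K)$ furnished by \cite{BC}, run as a Mordell--Weil descent against the $n^2$-scaling of $[n]_\ast$ on the albanese kernel.
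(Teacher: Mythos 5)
Your proof is correct, but it takes a genuinely different route from the paper's. The paper makes no use of Beauville's decomposition: it first argues that the albanese kernel $T(A)$ over $\bar K$ is torsion-free, hence uniquely divisible, by Roitman's theorem, feeds this into the Selmer/Tate--Shafarevich machinery of \cite{BC} to get $T(A)(K)/nT(A)(K)=0$, deduces that $A_0(A)(K)/nA_0(A)(K)$ injects into $A(K)/nA(K)$, runs a Mordell--Weil-style descent to conclude that $A_0(A)(K)$ is finitely generated integrally, and finally compares $A_0(A_K)$ with the Galois invariants $A_0(A)(K)$ up to a torsion kernel $\Sigma$. Your argument instead stays entirely in the $\QQ$-linear world: the identity $\pi\{P\}=\tfrac12\,L(P)\ast L(P)$, the additivity of $L$ (equivalently of $\gamma(a)=\log_\ast[a]$ in the Pontryagin ring), and the vanishing of the grading above degree $g=2$ exhibit $T(A_K)\otimes\QQ$ as a quotient of $\mathrm{Sym}^2\bigl(A(K)\otimes\QQ\bigr)$. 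What this buys is an unconditional and purely formal proof of the stated $\otimes\QQ$ theorem, with the explicit bound $\dim_\QQ T(A_K)\otimes\QQ\le\binom{r+1}{2}$ for $r=\mathrm{rank}\,A(K)$, resting only on Mordell--Weil and Beauville's field-independent decomposition; what it does not give is the paper's stronger assertions that $T(A)$ vanishes over $\bar K$ and that $A_0(A)(K)$ is finitely generated before tensoring with $\QQ$, which the later sections of the paper lean on. The two delicate points you flag both check out: $[n]_\ast$ and the addition pushforward are defined over $K$ and so preserve the subgroup generated by $K$-rational points, and since $\QQ$ is flat, $\pi$ cuts out exactly $\ker(\mathrm{alb})\cap\bigl(A_0(A_K)\otimes\QQ\bigr)=T(A_K)\otimes\QQ$.
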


\begin{proof}
For this, we prove that over $\bar K$ the group $A_0(A(\bar K))$ is isomorphic to $A(\bar K)$. We need to prove that the albanese kernel of the natural albanese map 
$$alb_A: A_0(A(\bar K))\to A(\bar K)$$
is trivial. Suppose that there is an albanese kernel $T(A)$ associated with the albanese map that is not trivial. This kernel is defined over some finite extension $L$ of $K$, that is, there exists a non-trivial zero cycle in the albanese kernel which is defined over $L$. Then we have the exact sequence given by :

$$0\to T(A)\to A_0(A(\bar K))\to A(\bar K)\to 0$$

Considering the above sequence at the level of $n$-torsions
for $n\geq 2$ we have 
$$ T (A) [n]\cong 0$$
according to Roitman's torsion theorem. Then we can write the exact sequence 

$$0\to T(A)[n]\to T(A)\stackrel{n}{\to}T(A)\to 0$$

Hence by Roitman's theorem we have $T(A)\to T(A)$ given by $a\to na$ is an isomorphism. Considering the sequence:

$$0\to T(A)(K)/nT(A)(K)\to Sel^n(T(A)/K)\to \Sha(T(A)/K)[n]\to 0 $$
Since $T(A)[n]=0$ we have 

$$Sel^n(T(A)/K)=\prod_{v \textit{finite}} \ker(H^1(G,T(A)[n])\to H^1(G_v, T(A))[n])=0$$

This is due to the fact $T(A)[n]=0$. Then we have 

$$T(A)(K)/nT(A)(K)=0$$ 

Consequently, considering the sequence,

$$0\to T(A)(K)/nT(A)(K)\to A_0(A)(K)/nA_0(A)(K)\to A(K)/nA(K)$$

we see that $$A_0(A)(K)/nA_0(A)(K)\to A(K)/nA(K) $$
is injective. 

Now we know that by Mordell-Weill theorem $A(K)$ is finitely generated. Here we have the commutative diagram,
$$\diagram
   A_0(A)(K)\ar[dd] \ar[rr] & & A(K)\ar[dd] \\ \\
  A_0(A)(K)/nA_0(A)(K) \ar[rr] & & A(K)/nA(K)\\
  \enddiagram
  $$
Since the group $A(K)$ is finitely generated if $A_0(A)(K)$ is not finitely generated, there is a kernel $T(A)(K)$, such that 

$$A_0(A)(K)/T(A)(K)\subset A(K)$$

is finitely generated, and $T(A)(K)$ is not finitely generated. At the same time, we also have

$$T(A)(K)=nT(A)(K)\;.$$

Take an element $\alpha\neq 0$ in $$A_0(A)(K)/nA_0(A)(K)$$ and lift it to an element $\widetilde{\alpha}$ of $A_0(A)(K)$. Then $\widetilde{\alpha}$ is not equal to $n\beta$ for any other $\beta$ in $A_0(A)(K)$. Suppose that there are infinitely many distinct elements $\widetilde{\alpha_i}$ that are $\ZZ$-linearly independent and correspond to the distinct elements $\alpha_i$ of $A_0(A)(K)/nA_0(A)(K)$. We show that this is not possible. If possible, then, there are infinitely many such $\ZZ$-linearly independent elements in $T(A)(K)$ and we have  

$$T(A)(K)=nT(A)(K)\;.$$

Let us give some details on it.

Let $\alpha$ has infinitely many lifts, then consider the exact sequence

$$0\to T(A)(K)\to A_0(A)(K)\to A(K)$$

which can be achieved

$$0\to T(A)\to A_0(A)\to A\to 0$$

by taking Galois invariants.

By Roitman's torsion theorem, the $H^1(G,T(A))=0$, $G$ is the Galois group, $T(A)$ is uniquely divisible (applying the Roitman torsion theorem).
So we have 
$$0\to T(A)(K)\to A_0(A)(K)\to A(K)\to 0$$

Therefore, the lift of $\alpha$ can be mapped to zero or non-zero by the albanese map.

If the image of $alb_A(\widetilde{\alpha_i})\neq 0$ is in $A(K)$, which is finitely generated by the Mordell-Weil theorem for abelian varieties. Therefore, $\widetilde{\alpha_i}$, such that its image under the albanese map is non-zero gives finitely many $\ZZ$-linearly independent elements in $A_0(A)(K)$.

So suppose that there are infinitely many $\ZZ$-independent 
$\widetilde{\alpha_i}$ such that it is in $T(A)(K)$.

Therefore, there exist $\beta_i$ infinitely many such that

$$n\beta_i=\widetilde{\alpha_i}$$
in $A_0(A)(K)$.

This contradicts the fact that $\alpha_i\neq 0$ in the quotient group

$$A_0(A)(K)/nA_0(A)(K)$$ 

This means that, given every element $\alpha$ in 

$$A_0(A)(K)/nA_0(A)(K)$$ 

there are finitely many $\ZZ$-independent lifts of $\alpha$, say 
$$\widetilde{\alpha_1}, \cdots, \widetilde{\alpha_k}\;.$$

Since $A(K)$ is finitely generated hence we are remaining to prove that $T(A)(K)$ is finitely generated. Suppose not, then there are infinitely many $\ZZ$-independent $\widetilde{\alpha_i}$ in $T(A)(K)$. For each $i$, there exists a unique $\beta_i$, such that 
$$\widetilde{\alpha_i}=n\beta_i$$
Since $T(A)(K)\subset A_0(A)(K)$ it follows that this equality occurs in $A_0(A)(K)$ as well. So, the class of $\widetilde{\alpha_i}$ under the quotient map $A_0(A)(K)\to A_0(A)(K)/n$ is zero. So, the kernel  of the quotient map is infinitely generated. Also, observe that the kernel contains $T(A)(K)$. So $T(A)(K)\subset nA_0(A)(K)$. 

Suppose that there is $\gamma\in nA_0(A)(K)$ and not in $T(A)(K)$, then 
$$alb_A(\gamma)\neq 0$$
in $A(K)$. So we have to prove that $nA_0(A)(K)/T(A)(K)$ is finitely generated.

Consider the exact sequence:
$$0\to nT(A)\to nA_0(A)\to nA\to 0$$
given by $n\alpha\mapsto n alb_A(\alpha)$, this map is surjective as $A_0(A),A$ are divisible. Taking the Galois cohomology, we have,
$$0\to nT(A)(K)\to nA_0(A)(K)\to nA(K)\to H^1(G, nT(A))$$
The last term in the exact sequence is zero as $nT(A)$ is also unique $n-divisible$. Also, observe that $T(A)(K)=nT(A)(K)$. So we have $nA_0(A)(K)/T(A)(K)$ that is finitely generated, and hence we are done because of $0\to T(A)(K)\to nA_0(A)(K)\to nA(K)\to 0$. So, given a $\gamma$ in $nA_0(A)(K)$, we have finitely many $\gamma_i$, such that
$$\gamma-\sum_i n_i\gamma_i\in T(A)(K)$$
Now write $\gamma=n\beta, \gamma_i=n\beta_i$.
So we have 
$$nalb_A(\beta-\sum_i n_i\beta_i)=0$$
in $A(K)$. So, by Roitman's theorem we have,
$\beta-\sum_i n_i\beta_i=\alpha$, where $\alpha$ is an n-torsion. Therefore, multiplying by $n$, we have
$$\gamma=\sum_i n_i \gamma_i$$ in $nA_0(A)(K)$. So $nA_0(A)(K)$ is finitely generated and hence we are done.

 This is not all, as the group $A_0(A_K)$ is not always the galois-invariant cycle $A_0(A)(K)$. This is only true if the above two groups are isomorphic. There is a natural map of $A_0(A_K)\to A_0(A(\bar K))$, possibly with a kernel. Let us denote this kernel as $\Sigma$, then we have the exact sequence:
 
$$0\to \Sigma\to A_0(A_K)\to A_0(A(\bar K)) $$

By the classical theory of algebraic cycles, $\Sigma$ is torsion. Hence, by taking Galois invariants we have

$$ 0\to \Sigma(K)\to A_0(A_K)\to A_0(A)(K)$$

Therefore, $\Sigma(K)$ is torsion and we have $A_0(A_K)/\Sigma(K)$ is a finitely generated group, which is a subgroup of $A_0(A)(K)$. Then
tensoring with $\QQ$-coefficients we have $A_0(A_K)\otimes \QQ$ is finitely generated.
\end{proof}

\section{Consequence of the above Theorem}

In this section, we prove the following theorem.
Let $A_0(A_{\QQ})$ denote the group of zero cycles of degree zero of degree zero generated by $\QQ$-rational points on $A$ modulo rational equivalence.

\begin{thm}
Let $A$ be an abelian surface defined over $\QQ$ and suppose that the strong version of the Birch-Swinnerton Dyer conjecture is known for $A$, then the Bloch-Beilinson conjecture for $A_0(A_{\QQ})$ follows from it.
\end{thm}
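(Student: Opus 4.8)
The plan is to reduce the rank of $A_0(A_{\QQ})$ to the Mordell--Weil rank of $A(\QQ)$ through the albanese map, to convert the order of vanishing of $L(H^3(A),s)$ at $s=2$ into that of $L(A,s)$ at $s=1$ by a single Tate twist, and to bridge the two analytic quantities by Birch--Swinnerton-Dyer. Concretely, I would prove the chain
\[
\operatorname{rank} A_0(A_{\QQ})=\operatorname{rank} A(\QQ)=\operatorname{ord}_{s=1}L(A,s)=\operatorname{ord}_{s=2}L(H^3(A),s),
\]
in which the first equality is geometric, the second is the order-of-vanishing part of BSD, and the third is a formal identity of $L$-functions of abelian varieties. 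The outer equality is precisely the Bloch--Beilinson prediction $\operatorname{rank} A_0(A_{\QQ})=\operatorname{ord}_{s=2}L(H^3(A),s)$.

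First I would establish the geometric step. The albanese (sum) map $alb_A\colon A_0(A_{\QQ})\to A(\QQ)$ is surjective, since for each $\QQ$-rational point $P$ the cycle $[P]-[0]$ lies in $A_0(A_{\QQ})$ and maps to $P$, and such points generate $A(\QQ)$; its kernel is the albanese kernel $T(A_{\QQ})$. By Roitman's torsion theorem $T(A)[n]=0$ for all $n\ge 2$ over $\bar{\QQ}$, so the Galois-invariant albanese kernel is uniquely divisible. The finiteness input of \cite{BC} underlying Theorem \ref{thm1} forces the Galois-invariant kernel to be finitely generated, and a finitely generated uniquely divisible group is trivial; together with the fact (used at the end of the proof of Theorem \ref{thm1}) that the kernel $\Sigma$ of $A_0(A_{\QQ})\to A_0(A(\bar{\QQ}))$ is torsion, this gives $T(A_{\QQ})\otimes\QQ=0$. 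Hence $alb_A$ induces an isomorphism $A_0(A_{\QQ})\otimes\QQ\xrightarrow{\ \sim\ }A(\QQ)\otimes\QQ$, so $\operatorname{rank} A_0(A_{\QQ})=\operatorname{rank} A(\QQ)$.

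Next I would treat the $L$-function identity. For an abelian surface one has $H^i(A)=\wedge^i H^1(A)$, so $H^3(A)\cong (H^1(A))^\vee\otimes H^4(A)$ with $H^4(A)=\QQ_\ell(-2)$; the Weil pairing identifies $(H^1(A))^\vee$ with $H^1(A^\vee)(1)$, giving $H^3(A)\cong H^1(A^\vee)(-1)$. Since every abelian variety is isogenous to its dual, $H^1(A^\vee)$ and $H^1(A)$ have the same Euler factors, so
\[
L(H^3(A),s)=L(A^\vee,s-1)=L(A,s-1),
\]
and therefore $\operatorname{ord}_{s=2}L(H^3(A),s)=\operatorname{ord}_{s=1}L(A,s)$. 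Inserting the rank statement of BSD, $\operatorname{ord}_{s=1}L(A,s)=\operatorname{rank} A(\QQ)$, and combining with the albanese isomorphism of the previous step yields the Bloch--Beilinson equality. I note that the order-of-vanishing form stated here already follows from the weak (rank) form of BSD; the \emph{strong} form is invoked so that the leading-coefficient refinement of Bloch--Beilinson can also be matched, by transporting the regulator, the group $\Sha$ and the Tamagawa factors of the strong BSD formula across the same albanese isomorphism and Tate twist.

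The one place where genuine content enters is the geometric step: the assertion $T(A_{\QQ})\otimes\QQ=0$, i.e.\ that degree-zero zero-cycles are, up to torsion, detected by the points of $A$. Over $\CC$ this fails dramatically, since Mumford's theorem makes the albanese kernel infinite-dimensional; it is only the arithmetic finiteness encoded in Theorem \ref{thm1} (ultimately the finiteness of the relevant Selmer-type group from \cite{BC}) that collapses the kernel over a number field. Everything afterward---the Tate twist comparison of $L$-functions and the substitution of BSD---is formal. A secondary point requiring care is the analytic continuation and functional equation of $L(H^3(A),s)$, needed to make $\operatorname{ord}_{s=2}$ well defined and genuinely invariant under the shift $s\mapsto s-1$; for the modular abelian surfaces of \cite{KS},\cite{KS1},\cite{LZ} this input is available.
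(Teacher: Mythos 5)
Your proposal is correct in substance and its geometric half coincides with the paper's: both derive triviality of the albanese kernel from Roitman's theorem (unique divisibility of $T(A)$) played against the finite generation supplied by Theorem \ref{thm1} and the Selmer finiteness of \cite{BC}, and both then read off the rank of $A_0(A_{\QQ})$ from $A(\QQ)$ through the albanese map. Where you genuinely diverge is the analytic step. The paper compares the two $L$-values via Poincar\'e duality together with the motivic functional equation, writing $L(H^3(A),s)=L(H^3(A)^{\vee},1-s)=L(H^1(A),3-s)$ and evaluating at $s=2$; you instead use the unconditional isomorphism of Galois modules $H^3(A)\cong \wedge^3 H^1(A)\cong (H^1(A))^{\vee}(-2)\cong H^1(A^{\vee})(-1)\cong H^1(A)(-1)$ (Weil pairing plus $A\sim A^{\vee}$), which gives $L(H^3(A),s)=L(A,s-1)$ as an identity of Euler products. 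Your route buys two things: first, it does not invoke the functional equation at all, which matters because the functional equation strictly relates \emph{completed} $L$-functions with gamma and epsilon factors, so the paper's bare-$L$-function manipulation $L(M,s)=L(M^{\vee},1-s)$ needs justification that the Tate-twist identity sidesteps (only analytic continuation to the relevant point remains as input, as you note); second, combined with the surjectivity of $alb_A$ you obtain the full rank equality $\operatorname{rank}A_0(A_{\QQ})=\operatorname{ord}_{s=2}L(H^3(A),s)$ for arbitrary rank, whereas the paper's written proof treats only the case $L(A,1)\neq 0$, i.e.\ rank zero, matching the weaker statement of its Theorem 2 in the introduction. Your observation that the order-of-vanishing conclusion already follows from the weak (rank) form of BSD is also accurate: despite the hypothesis ``strong BSD,'' the paper uses only the implication $L(A,1)\neq 0\Rightarrow \operatorname{rank}A(\QQ)=0$. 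One caution: your closing claim about matching leading coefficients across the albanese isomorphism is a sketch, not an argument, and the paper does not attempt it either; if you retain it, flag it explicitly as beyond what either proof establishes.
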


\begin{proof}
In the last theorem, we have proven that the group $A_0(A_K)\otimes \QQ$ is finitely generated. From this we prove that the albanese kernel over $\bar \QQ$ is trivial. Suppose that it is non-trivial, then take $n\geq 2$. By Roitman's theorem we have $T(A)[n]=0$. So we have 
$$T(A)\to T(A)$$
given by 
$$a\to na$$ an isomorphism. So, the group $T(A)$ is uniquely divisible. 

On the other hand, we have $A_0(A_K)\to A(K)$, the kernel of the albanese map, denoted by $T(A_K)$ a finitely generated group. Now any element in the albanese kernel $T(A)$ is in $T(A_K)$ for some finite extension $K$ of $\QQ$. Now, observe that 

$$T(A_K)=nT(A_K)$$

This is not possible because a finitely generated abelian group cannot be divisible. Therefore, $T(A)=0$ and hence 

$$A_0(A(\bar \QQ))\cong A(\bar \QQ)$$

Moreover, we have $T(A_K)=0$ and hence 

$$A_0(A_K)\subset A(K)\;.$$

Suppose that the rank of $A(\QQ)$ is zero, then it follows that the rank of $A_0(A_{\QQ})$ is zero. Suppose now that the Birch-Swinnerton Dyer conjecture is known for $A_{\QQ}$ and further

$$L(A_{\QQ},1)\neq 0$$

Then we show that, 

$$L(H^3(A),2)\neq 0\;.$$

Observe that we have a non-degenerate pairing 

$$H^1(A,\QQ_l(2))\times H^3(A, \QQ_l)\to \QQ_l$$

given by the Poincare duality for l-adic cohomology theory. So, we have $H^3(A,\QQ_l)$ is dual of $H^1(A,\QQ_l(2))$. Now by the functional equation for the L-function we have 

$$L(H^3(A),s)=L(H^3(A)^{\vee},1-s)=L(H^1(A)(-2),1-s)=L(H^1(A),3-s)$$

Putting $s=2$ we have 

$$L(H^3(A),2)=L(H^1(A),1)\neq 0$$

provide that the latter is known to be non-vanishing.
\end{proof}

\subsection{Kummer surface and zero cycles}

Let $A$ be an abelian surface over $K$, a number field. Consider the involution $i$ on $A$ defined by

$$i(a)=-a$$

Consider the quotient $A/i$, this surface has finitely many quotient singularities. Blowing up these singularities, we have a smooth surface $X$, which is a K3 surface and is called the Kummer K3 surface. Since $X$ can also be obtained as a quotient of the involution acting on $\widetilde{A}$, where $\widetilde{A}$ is the blow-up of $A$  at finitely many fixed points of $i$. The group $CH_0$ is a birational invariant of smooth and projective varieties. We have,

$$A_0(A(\bar K))=A_0(\widetilde{A}(\bar K))$$ 

Since the albanese kernel is trivial for $A$, so is for $\widetilde{A}$. Also, we have 

$$A_0(A(\bar K))^i=A_0(\widetilde{A}(\bar K))^i=A_0(X)$$

with $\QQ$-coefficients. Together, these two facts tell us that the albanese kernel of 

$$A_0(X)\to Alb(X)=0$$ 

is torsion and hence by Roitman's Theorem the albanese kernel is trivial for $X$. 

So we get the following theorem:

\begin{thm}
The group $A_0(X)=0$ for the Kummer K3 surface $X$ associated with $A$.
\end{thm}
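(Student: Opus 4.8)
The plan is to assemble the three facts recorded just above the statement — the triviality of the albanese kernel of $A$ over $\bar K$, birational invariance of $CH_0$, and the identification of $A_0(X)$ with the $i$-invariant part of $A_0(\widetilde A(\bar K))$ rationally — into the vanishing $A_0(X)\otimes\QQ=0$, and then to clear torsion with Roitman's theorem. First I would record that the previous section gives $A_0(A(\bar K))\cong A(\bar K)$ through the albanese map. Since $\widetilde A$ is the blow-up of $A$ along the sixteen fixed points of $i$, a centre of codimension two, the blow-up formula for zero cycles gives $A_0(\widetilde A(\bar K))=A_0(A(\bar K))$: the exceptional curves contribute only to cycles of positive dimension and leave $A_0$ untouched.

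Next I would invoke the rational-coefficient descent for a finite quotient. For the order-two group $\langle i\rangle$ acting on the smooth projective surface $\widetilde A$ with quotient the Kummer surface $X$, the quotient morphism induces an isomorphism
$$A_0(X)\otimes\QQ\;\cong\;\bigl(A_0(\widetilde A(\bar K))\otimes\QQ\bigr)^{i}.$$
To compute the right-hand side, note that under the albanese identification $A_0(A(\bar K))\cong A(\bar K)$ the involution acts by pushforward, sending a point $p$ to $-p$, hence it acts as multiplication by $-1$ on $A(\bar K)$. After tensoring with $\QQ$ the fixed-point condition $v=-v$ forces $v=0$, so $(A_0(A(\bar K))\otimes\QQ)^{i}=0$. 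Chaining the displays gives $A_0(X)\otimes\QQ=0$; equivalently, $A_0(X)$ is a torsion group.

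Finally, because $X$ is a K3 surface its albanese variety is trivial, so by Roitman's torsion theorem the torsion subgroup of $A_0(X)$ injects into $\mathrm{Alb}(X)=0$ and hence vanishes. A group that is simultaneously torsion and torsion-free is zero, yielding $A_0(X)=0$. The step carrying the real content is the pairing of the rational quotient isomorphism with the compatibility between the geometric involution on cycles and the $(-1)$-map on the albanese; once these are secured, the eigenspace computation and the Roitman step are formal. I would also flag that the descent isomorphism genuinely needs $\QQ$-coefficients, since its transfer proof divides by the group order, so the integral vanishing $A_0(X)=0$ rests on the separate torsion input from Roitman rather than on the quotient isomorphism by itself.
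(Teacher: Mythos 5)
Your proof follows the same route as the paper: birational invariance of $A_0$ under the blow-up, the identification $A_0(X)\otimes\QQ\cong(A_0(\widetilde A)\otimes\QQ)^i$, the vanishing of the invariant part, and Roitman's theorem combined with $\mathrm{Alb}(X)=0$ to kill the remaining torsion. You in fact supply a step the paper leaves implicit --- that under the albanese identification $A_0(A(\bar K))\cong A(\bar K)$ the involution acts as multiplication by $-1$, so the $\QQ$-rational invariants vanish --- which is the computation actually driving the conclusion.
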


\textbf{Acknowledgement: }{The author thanks the hospitality of SRM AP for hosting this project. Thanks to Matt Broe for pointing certain incomplete arguments present in the previous version of the arxiv preprint.}

\end{document}